\newtheorem{thm}{Theorem}[section]
\newtheorem{cor}[thm]{Corollary}
\newtheorem{prop}[thm]{Proposition}
\theoremstyle{definition}
\newtheorem{defn}[thm]{Definition}
\theoremstyle{remark}
\newtheorem{rem}[thm]{Remark}
\numberwithin{equation}{section}
\begin{document}
\title[Riemannian foliations
on quaternion CR-submanifolds]{\bf Riemannian foliations on
quaternion CR-submanifolds of an almost quaternion K\"{a}hler
product manifold}
\author[G.E. V\^\i lcu]{Gabriel Eduard V\^\i lcu$^{1,2}$}
\maketitle \thispagestyle{empty}

\begin{center}
\emph{Dedicated to Professor Stere Ianu\c{s} on the occasion of his
70th birthday.}
\end{center}

\begin{abstract} The purpose of this paper is to study the canonical
foliations of a quaternion CR-submanifold of an
almost quaternion K\"{a}hler product manifold.\\
{\em AMS Mathematics Subject Classification.} 53C15. \\
{\em Keywords.} quaternion CR-submanifold, quaternion K\"{a}hler
manifold, Riemannian foliation.
\end{abstract}

\section{Introduction}

The notion of CR-submanifold, first introduced in K\"{a}hler
geometry by Bejancu (see \cite{BJC}), was extended in the quaternion
settings by Barros, Chen and Urbano in \cite{BCU}. They consider
CR-quaternion submanifolds of quaternion K\"{a}hlerian manifolds as
generalizations of both quaternion and totally real submanifolds.
Some foliations on this kind of submanifolds have been studied in
\cite{IIVL} and \cite{PS}.

The natural product of two K\"{a}hlerian manifolds is also a
K\"{a}hlerian manifold (\cite{YK}) and the geometry of
CR-submanifolds of K\"{a}hlerian product manifolds is an interesting
subject which was studied in \cite{ATC} and \cite{SHD}. On the other
hand, the product of two quaternion K\"{a}hler manifolds does not
become a quaternion K\"{a}hler manifold, but it is an almost
quaternion K\"{a}hler product manifold (see \cite{KN}).

The study of quaternion CR-submanifolds of an almost quaternion
K\"{a}hler product manifold was initiated in \cite{KL} by Kang and
Lee. If $M$ is a such kind of submanifold then two distributions,
denoted by $D$ and $D^\perp$, are defined on $M$. Moreover, if
$D^\perp$ is invariant   with  respect to the canonical almost
product structure $F$ on $M$, it follows that $D^\perp$ is always
integrable and so we have a foliation on $M$, called canonical
totally real foliation on $M$; on the other hand, if $M$ is a
$D$-geodesic CR-submanifold and $D$ is $F$-invariant then $D$ is
also integrable and so we have another foliation on $M$, called
quaternion foliation. The object of this note is to study these
foliations.

\section{Preliminaries}

Let $\overline{M}$ be a smooth manifold endowed with a tensor $F$ of
type (1,1) such that $F^{2}=Id.$ Then $(\overline{M},F)$ is said to
be an almost product manifold with almost product structure $F$.
Moreover if $\overline{g}$ is a Riemannian metric on $M$ such that:
$$\overline{g}(FX,FY)=\overline{g}(X,Y),\ \forall X,Y\in\Gamma(T\overline{M}),$$
then we say that $(\overline{M},F,\overline{g})$ is an almost
product Riemannian manifold.

\begin{defn} (\cite{KN})
Let $(\overline{M},F,\overline{g})$ be an almost product Riemannian
manifold of dimension $4n$ and assume that there is a rank
3-subbundle $\sigma$ of $End(T\overline{M})$ satisfying the
following conditions: \\
i. A local basis
$\lbrace{J_1,J_2,J_3}\rbrace$ exists of sections of $\sigma$ such
that:
       \begin{equation}\label{1}
       J_1^2=J_2^2=J_3^2=-Id,\ J_1J_2=-J_2J_1=J_3
       \end{equation}
and
         \begin{equation}\label{2}
         \overline{g}(J_\alpha X,J_\alpha Y)=
         \overline{g}(X,Y),\ \alpha\in\{1,2,3\}
         \end{equation}
for all local vector fields $X$, $Y$ on
$\overline{M}$.\\
ii. The Levi-Civita connection $\overline{\nabla}$ of $\overline{g}$
satisfies:
 \begin{eqnarray}\label{3}
                 (\overline{\nabla}_XJ_1)Y=k[\ \ \omega_3(X)J_2Y-\omega_2(X)J_3Y+\omega_3(FX)J_2(FY)-\omega_2(FX)J_3(FY)] \nonumber \\
                 (\overline{\nabla}_XJ_2)Y=k[-\omega_3(X)J_1Y+\omega_1(X)J_3Y-\omega_3(FX)J_1(FY)+\omega_1(FX)J_3(FY)] \nonumber \\
                 (\overline{\nabla}_XJ_3)Y=k[\ \
                 \omega_2(X)J_1Y-\omega_1(X)J_2Y+\omega_2(FX)J_1(FY)-\omega_1(FX)J_2(FY)]
                 \nonumber\\
    \end{eqnarray}
for some non-zero constant $k$ and all vector field $X$, $Y$ on
$\overline{M}$, $\omega_1,\omega_2,\omega_3$ being local 1-forms
over the open for which $\lbrace{J_1,J_2,J_3}\rbrace$ is a local
basis of $\sigma$.

Then $(\overline{M},F,\sigma,\overline{g})$ is said to be an almost
quaternion K\"{a}hler product manifold.
\end{defn}

\begin{rem}
The natural product manifold of two quaternion K\"{a}hler manifolds
is an almost quaternion K\"{a}hler product manifold. In this case we
have $k=\frac{1}{2}$ (see \cite{KN}).
\end{rem}

\begin{defn} (\cite{BCU}) A submanifold $M$ of an almost quaternion K\"{a}hler
product manifold $(\overline{M},F,\sigma,\overline{g})$ is called a
quaternion CR-submanifold if there exist two orthogonal
complementary
distributions $D$ and $D^\perp$ on $M$ such that: \\
i. $D$ is invariant under quaternion structure, that is:
\begin{equation}\label{4}
J_\alpha(D_x)\subseteq D_x, \forall x\in M,\
\forall\alpha\in\{1,2,3\};
\end{equation}
ii. $D^\perp$ is totally real, that is:
\begin{equation}\label{5}
J_\alpha(D_x^\perp)\subseteq T_xM^\perp, \forall\alpha\in\{1,2,3\},\
\forall x\in M.
\end{equation}
\end{defn}

A submanifold $M$ of an almost quaternion K\"{a}hler product
manifold $(\overline{M},F,\sigma,\overline{g})$ is a quaternion
submanifold (respectively, a totally real submanifold) if dim
$D^\perp=0$ (respectively, dim $D=0$).

A submanifold $M$ of an almost quaternion K\"{a}hler product
manifold $(\overline{M},F,\sigma,\overline{g})$ is called
$F$-invariant if $F(T_xM)\subset T_xM,\ \forall x\in M$.

The distribution $D$ (respectively $D^\perp$) is said to be
$F$-invariant if $F(D)\subset D$ (respectively $F(D^\perp)\subset
D^\perp$).

\begin{defn} (\cite{BCU}) Let $M$ be a quaternion CR-submanifold of
an almost quaternion K\"{a}hler product manifold
$(\overline{M},F,\sigma,\overline{g})$. Then $M$ is called a
QR-product if $M$ is locally the Riemannian product of a quaternion
submanifold and a totally real submanifold of $\overline{M}$.
\end{defn}

\begin{rem}
For a submanifold $M$ of an almost quaternion K\"{a}hler product
manifold $(\overline{M},F,\sigma,\overline{g})$, we denote by $g$
the metric tensor induced on $M$. If $\nabla$ is the covariant
differentiation induced on $M$, the Gauss and Weingarten formulas
are given by:
\begin{equation}\label{6}
       \overline{\nabla}_XY=\nabla_XY+B(X,Y), \forall X,Y \in
\Gamma(TM)
       \end{equation}
and
\begin{equation}\label{7}
       \overline{\nabla}_XN=-A_NX+\nabla_{X}^{\perp}N, \forall X\in
\Gamma(TM), \forall N\in \Gamma(TM^\perp)
       \end{equation}
where $B$ is the second fundamental form of $M$, $\nabla^\perp$ is
the connection on the normal bundle and $A_N$ is the shape operator
of $M$ with respect to $N$. The shape operator $A_N$ is related to
$B$ by:
\begin{equation}\label{8}
       g(A_NX,Y)=\overline{g}(B(X,Y),N),
       \end{equation}
for all $ X,Y\in \Gamma(TM)$ and $N\in \Gamma(TM^\perp)$.
\end{rem}

\begin{defn}\label{2.7} (\cite{BCU})
Let $M$ be a quaternion CR-submanifold of an almost quaternion
K\"{a}hler product manifold $(\overline{M},\sigma,\overline{g})$.
The we say that:
\\
i. $M$ is $D$-geodesic if $B(X,Y)=0, \forall X,Y\in\Gamma(D).$\\
ii. $M$ is $D^\perp$-geodesic if $B(X,Y)=0, \forall
X,Y\in\Gamma(D^\perp).$\\
iii. $M$ is mixed geodesic if $B(X,Y)=0, \forall X\in\Gamma(D),
Y\in\Gamma(D^\perp).$
\end{defn}

We recall now the following results which we shall need in the
sequel.

\begin{thm} $($\cite{KL}$)$\label{2.8}
If $M$ is a quaternion CR-submanifold of an almost quaternion
K\"{a}hler product manifold $(\overline{M},F,\sigma,\overline{g})$
such that the totally real distribution $D^\perp$ is $F$-invariant,
then $D^\perp$ is integrable.
\end{thm}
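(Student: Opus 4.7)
My plan is to verify the Frobenius criterion for $D^\perp$: for arbitrary $X, Y \in \Gamma(D^\perp)$ and $W \in \Gamma(D)$, show that $g([X,Y], W) = 0$. Because each $J_\alpha$ restricts to an isomorphism of $D$, every such $W$ has the form $J_\alpha Z$ with $Z \in \Gamma(D)$, so it suffices to prove $g([X,Y], J_\alpha Z) = 0$ for a fixed $\alpha \in \{1,2,3\}$.

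First I would derive a clean formula for $g(\nabla_X Y, J_\alpha Z)$. Since $Y \in D^\perp$, the field $J_\alpha Y$ is normal by (\ref{5}), so the Weingarten formula (\ref{7}) applies. Writing $\overline{\nabla}_X(J_\alpha Y)$ in two ways---once via (\ref{7}) and once as $(\overline{\nabla}_X J_\alpha)Y + J_\alpha(\nabla_X Y + B(X,Y))$ using (\ref{6})---and then pairing the resulting identity against $Z$, the contribution of $B(X,Y)$ drops out because $J_\alpha Z \in D \subset TM$ is orthogonal to the normal bundle. The hypothesis enters decisively at this point: every summand of $(\overline{\nabla}_X J_\alpha)Y$ in formula (\ref{3}) is of the form $J_\beta Y$ or $J_\beta(FY)$, and because $Y \in D^\perp$ and $FY \in D^\perp$ (this is where $F$-invariance of $D^\perp$ is used), both lie in $TM^\perp$ by (\ref{5}). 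Hence $(\overline{\nabla}_X J_\alpha)Y$ is normal, and its pairing with the tangent vector $Z$ vanishes. What survives, after applying (\ref{8}), is the clean identity
\[
g(\nabla_X Y, J_\alpha Z) \;=\; \overline{g}(B(X,Z), J_\alpha Y),
\]
together with the analogous formula obtained by interchanging $X$ and $Y$.

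The remaining---and in my view principal---obstacle is to establish the symmetry
\[
\overline{g}(B(X,Z), J_\alpha Y) \;=\; \overline{g}(B(Y,Z), J_\alpha X),
\]
after which subtracting the two formulas immediately yields $g([X,Y], J_\alpha Z) = 0$. I would handle this as a brief auxiliary step by differentiating the identity $\overline{g}(X, J_\alpha Y) \equiv 0$ along $Z$ and computing $\overline{g}(\overline{\nabla}_Z X, J_\alpha Y)$ in two ways: directly it equals $\overline{g}(B(Z,X), J_\alpha Y)$ by (\ref{6}), while expanding $\overline{\nabla}_Z(J_\alpha Y) = (\overline{\nabla}_Z J_\alpha)Y + J_\alpha \overline{\nabla}_Z Y$ and invoking the same $F$-invariance trick (which makes $(\overline{\nabla}_Z J_\alpha)Y$ normal and hence orthogonal to $X$) reduces it to $\overline{g}(B(Z,Y), J_\alpha X)$. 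Symmetry of the second fundamental form closes the argument, and hence the theorem.
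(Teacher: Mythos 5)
Your proof is correct, and it is essentially the paper's own approach: the paper only quotes this theorem from \cite{KL} without proof, but your key identity $g(\nabla_X Y,J_\alpha Z)=\overline{g}(B(X,Z),J_\alpha Y)$ is precisely equation (\ref{13bis}) derived in the proof of Theorem \ref{3.1}, by the same mechanism (Gauss--Weingarten formulas plus (\ref{3}), with $F$-invariance of $D^\perp$ used exactly where you use it, to make every summand of $(\overline{\nabla}_X J_\alpha)Y$ normal). Your auxiliary symmetry $\overline{g}(B(X,Z),J_\alpha Y)=\overline{g}(B(Y,Z),J_\alpha X)$, obtained by differentiating $\overline{g}(X,J_\alpha Y)=0$ along $Z$, is sound and correctly antisymmetrizes the identity to give $[X,Y]\perp D$, so the argument is complete.
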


\begin{thm} $($\cite{KL}$)$\label{2.9}
If $M$ is a quaternion CR-submanifold of an almost quaternion
K\"{a}hler product manifold $(\overline{M},F,\sigma,\overline{g})$
such that the quaternion distribution $D$ is $F$-invariant, then $D$
is integrable if and only if $M$ is $D$-geodesic.
\end{thm}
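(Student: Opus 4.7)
\bigskip

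\noindent\textbf{Proof plan.} Since $TM = D \oplus D^\perp$, the distribution $D$ is integrable iff the $D^\perp$-component of $[X,Y]$ vanishes for all $X,Y \in \Gamma(D)$. Because $D$ is invariant by each $J_\alpha$ while $D^\perp$ is totally real, $J_\alpha$ sends $D$ into $D$ and $D^\perp$ into $TM^\perp$; thus the $D^\perp$-component of $[X,Y]$ vanishes iff the normal ($TM^\perp$-) component of $J_\alpha[X,Y]$ vanishes. The strategy is to isolate this normal component via the Gauss formula.

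Starting from $\overline{\nabla}_X(J_\alpha Y) = (\overline{\nabla}_X J_\alpha)Y + J_\alpha\overline{\nabla}_X Y$ and the symmetric formula with $X,Y$ swapped, subtracting and using $J_\alpha Y, J_\alpha X \in D \subset TM$ together with (\ref{6}), I would obtain
$$J_\alpha[X,Y] = \nabla_X(J_\alpha Y) - \nabla_Y(J_\alpha X) + B(X,J_\alpha Y) - B(Y, J_\alpha X) - \bigl[(\overline{\nabla}_X J_\alpha)Y - (\overline{\nabla}_Y J_\alpha)X\bigr].$$
The point of the hypothesis that $D$ is $F$-invariant is that, applied to $X,Y \in \Gamma(D)$, each term on the right of the covariant-derivative identities (\ref{3}) involves only $J_\beta X$, $J_\beta Y$, $J_\beta(FX)$, $J_\beta(FY)$, all of which lie in $D$. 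Consequently $(\overline{\nabla}_X J_\alpha)Y - (\overline{\nabla}_Y J_\alpha)X$ is tangent to $M$, and comparing normal components yields
$$J_\alpha\bigl([X,Y]_{D^\perp}\bigr) = B(X,J_\alpha Y) - B(Y, J_\alpha X).$$
Since $J_\alpha$ is an isometry of $T\overline{M}$, it is injective on $D^\perp$, so $D$ is integrable iff $B(X, J_\alpha Y) = B(Y, J_\alpha X)$ for all $X,Y \in \Gamma(D)$.

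One direction is immediate: if $M$ is $D$-geodesic then both sides of this identity vanish, so $D$ is integrable. For the converse, I would substitute $Y \mapsto J_\alpha Y$ in the identity, use $J_\alpha^2 = -\mathrm{Id}$ and the symmetry of $B$ to obtain
$$B(J_\alpha X, J_\alpha Y) = -B(X,Y), \quad \alpha = 1,2,3.$$
The key algebraic step, which I expect to be the decisive one, is to exploit the quaternion relation $J_3 = J_1 J_2$: applying the relation twice gives
$$B(J_3 X, J_3 Y) = B(J_1(J_2 X), J_1(J_2 Y)) = -B(J_2 X, J_2 Y) = B(X,Y),$$
while the case $\alpha = 3$ directly yields $B(J_3 X, J_3 Y) = -B(X,Y)$. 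Hence $B(X,Y) = 0$ for all $X, Y \in \Gamma(D)$, which is precisely the $D$-geodesic condition.

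The main obstacle is bookkeeping the covariant-derivative terms coming from (\ref{3}) and verifying that $F$-invariance of $D$ really does force them all to be tangential; once this is done, the quaternion identity $J_1 J_2 = J_3$ closes the argument cleanly.
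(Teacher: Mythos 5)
Your proof is correct: the identity $J_\alpha([X,Y]_{D^\perp})=B(X,J_\alpha Y)-B(Y,J_\alpha X)$ follows exactly as you argue, with the $F$-invariance of $D$ used precisely to keep the $(\overline{\nabla}_XJ_\alpha)Y$ terms from (\ref{3}) inside $D$, and the quaternionic trick $B(J_\alpha X,J_\alpha Y)=-B(X,Y)$ combined with $J_1J_2=J_3$ correctly forces $B$ to vanish on $D\times D$. Note that the paper itself states Theorem \ref{2.9} without proof, citing \cite{KL}; your argument is essentially the standard one from that source (going back to \cite{BCU} in the quaternion K\"{a}hler case) and uses the same computational machinery --- equations (\ref{3}), (\ref{6}), (\ref{7}) --- that the paper deploys in its own proofs of Theorems \ref{3.1}, \ref{4.2} and \ref{5.1}.
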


\section{Totally real foliation on a quaternion CR-submanifold}

Let $M$ be a quaternion CR-submanifold of an almost quaternion
K\"{a}hler product manifold $(\overline{M},F,\sigma,\overline{g})$.
Then we have the orthogonal decomposition:
$$TM=D\oplus D^\perp.$$

We have also the following orthogonal decomposition:
$$TM^\perp=\mu\oplus\mu^\perp,$$
where $\mu$ is the subbundle of the normal bundle $TM^\perp$ which
is the orthogonal complement of:
$$\mu^\perp=J_1D^\perp\oplus J_2D^\perp\oplus J_3D^\perp.$$

If the totally real distribution $D^\perp$ is $F$-invariant, by
Theorem \ref{2.8} we can consider the foliation $\mathfrak{F}^\perp$
on $M$, with structural distribution $D^\perp$ and transversal
distribution $D$, called the canonical totally real foliation on
$M$.

We can illustrate now some of the techniques in this paper on the
following theorem (see also \cite{BJC2}, \cite{IIV}, \cite{IIVL},
\cite{KL}).

\begin{thm}\label{3.1}
If $M$ is a quaternion CR-submanifold of an almost quaternion
K\"{a}hler product manifold $(\overline{M},F,\sigma,\overline{g})$
such that the totally real distribution $D^\perp$ is $F$-invariant,
then the next assertions are equivalent:
\\
i. The canonical totally real foliation $\mathfrak{F}^\perp$ is
totally geodesic;
\\
ii. $B(X,Y)\in\Gamma(\mu)$, $\forall X\in\Gamma(D)$,
$Y\in\Gamma(D^\perp)$;
\\
iii. $A_NX\in\Gamma(D^\perp)$, $\forall X\in\Gamma(D^\perp)$,
$N\in\Gamma(\mu^\perp)$;
\\
iv. $A_NY\in\Gamma(D)$, $\forall Y\in\Gamma(D)$,
$N\in\Gamma(\mu^\perp)$.
\end{thm}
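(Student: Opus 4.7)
The plan is to establish (i) $\Leftrightarrow$ (ii) as the substantive content of the theorem, and then to read off the equivalences with (iii) and (iv) directly from the shape-operator identity \eqref{8}. For the latter, since $\mu^\perp=J_1D^\perp\oplus J_2D^\perp\oplus J_3D^\perp$, condition (ii) is exactly the statement that $\overline{g}(B(X,Y),J_\alpha W)=0$ for every $X\in\Gamma(D)$, $Y,W\in\Gamma(D^\perp)$ and $\alpha\in\{1,2,3\}$. Applying \eqref{8} together with the symmetry of $B$, this rewrites equivalently as $g(A_{J_\alpha W}X,Y)=0$, which is (iii), and as $g(A_{J_\alpha W}Y,X)=0$, which is (iv).

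The substantive step is (i) $\Leftrightarrow$ (ii). By definition, $\mathfrak{F}^\perp$ is totally geodesic if and only if $\nabla_YZ\in\Gamma(D^\perp)$ for all $Y,Z\in\Gamma(D^\perp)$, and since $D$ is $J_\alpha$-invariant this is equivalent to $g(\nabla_YZ,J_\alpha X)=0$ for all $X\in\Gamma(D)$ and $\alpha\in\{1,2,3\}$. I would compute $\overline{\nabla}_Y(J_\alpha Z)$ in two ways. The Weingarten formula \eqref{7}, applied to the normal vector $J_\alpha Z$, shows that its tangential part equals $-A_{J_\alpha Z}Y$; the direct expansion via \eqref{6} yields $(\overline{\nabla}_YJ_\alpha)Z+J_\alpha\nabla_YZ+J_\alpha B(Y,Z)$. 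Taking the $\overline{g}$-inner product with $X\in\Gamma(D)$, the term $J_\alpha B(Y,Z)$ drops out since $J_\alpha X$ is tangent and $B(Y,Z)$ is normal, and the key cancellation is that $(\overline{\nabla}_YJ_\alpha)Z$ is purely normal: by \eqref{3} it is a linear combination of $J_\beta Z$ and $J_\beta(FZ)$, and the hypothesis that $D^\perp$ is $F$-invariant forces $FZ\in\Gamma(D^\perp)$, so each such vector lies in $TM^\perp$. What remains is the identity $g(A_{J_\alpha Z}Y,X)=g(\nabla_YZ,J_\alpha X)$, which \eqref{8} converts to $\overline{g}(B(X,Y),J_\alpha Z)=g(\nabla_YZ,J_\alpha X)$. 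As $J_\alpha X$ runs over $D$ together with $X$, the vanishing of the right-hand side for all $X\in\Gamma(D)$ is equivalent to $B(X,Y)\perp J_\alpha Z$ for all admissible $X,Y,Z,\alpha$, i.e., to $B(X,Y)\in\Gamma(\mu)$.

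The obstacle I expect to focus on is isolating the cancellation of $(\overline{\nabla}_YJ_\alpha)Z$. Without the $F$-invariance assumption the vectors $FZ$ could have $D$-components, and the right-hand side of \eqref{3} would then contribute tangential terms that would spoil the argument; so the hypothesis enters the proof at exactly this step.
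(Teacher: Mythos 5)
Your proposal is correct and takes essentially the same route as the paper: your key identity $\overline{g}(B(X,Y),J_\alpha Z)=g(\nabla_Y Z,J_\alpha X)$ is exactly the paper's equation \eqref{13bis}, obtained by the same computation of $\overline{\nabla}_Y(J_\alpha Z)$ via the Weingarten formula on one side and $(\overline{\nabla}_Y J_\alpha)Z+J_\alpha\nabla_Y Z+J_\alpha B(Y,Z)$ on the other, with the $F$-invariance of $D^\perp$ entering precisely to make $(\overline{\nabla}_Y J_\alpha)Z$ normal via \eqref{3} and \eqref{5}, and with the equivalences to (iii) and (iv) read off from \eqref{8} and the self-adjointness of $A_N$ just as in the paper. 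The only slip is cosmetic: in your first paragraph the scalar conditions you attach to (iii) and (iv) are interchanged, but since $A_{J_\alpha W}$ is self-adjoint the two identities coincide, so nothing is affected.
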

\begin{proof}
i. $\Leftrightarrow$ ii. For $X,Z\in\Gamma(D^\perp)$ and
$Y\in\Gamma(D)$ we have:
\begin{eqnarray}
\overline{g}(J_\alpha(\nabla_XZ),Y)&=&-\overline{g}(\overline{\nabla}_XZ-B(X,Z),J_\alpha Y)\nonumber\\
&=&\overline{g}(-(\overline{\nabla}_XJ_\alpha)Z+\overline{\nabla}_XJ_\alpha
       Z,Y)\nonumber\\
       &=&k\overline{g}(\omega_\beta(X)J_\gamma
Z-\omega_\gamma(X)J_\beta
Z,Y)\nonumber\\&&+k\overline{g}(\omega_\beta(FX)J_\gamma(FZ)-\omega_\gamma(FX)J_\beta(FZ),Y)\nonumber\\
       &&+\overline{g}(-A_{J_\alpha Z}X+\nabla_X^{\perp}J_\alpha
Z,Y)\nonumber\\
       &=&-g(A_{J_\alpha Z}X,Y)\nonumber
       \end{eqnarray}
where $(\alpha,\beta,\gamma)$ is an even permutation of (1,2,3), and
taking into account (\ref{7}) we obtain:
\begin{equation}\label{13bis}
\overline{g}(J_\alpha(\nabla_XZ),Y)=-\overline{g}(B(X,Y),J_\alpha
Z).
\end{equation}

The equivalence is now clear from (\ref{13bis}).
\\
ii. $\Leftrightarrow$ iii. This equivalence follows from (\ref{8}).
\\
iii. $\Leftrightarrow$ iv. This equivalence is true because $A_N$ is
a self-adjoint operator.
\end{proof}

\begin{prop}\label{3.3}
If $M$ is a quaternion CR-submanifold of an almost quaternion
K\"{a}hler product manifold $(\overline{M},F,\sigma,\overline{g})$
such that the totally real distribution $D^\perp$ is $F$-invariant
and $M$ is mixed geodesic, then the canonical totally real foliation
$\mathfrak{F}^\perp$ is totally geodesic.
\end{prop}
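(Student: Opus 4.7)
The plan is to reduce the statement immediately to Theorem \ref{3.1}, which already provides an equivalent characterization of when $\mathfrak{F}^\perp$ is totally geodesic in terms of the second fundamental form. Specifically, I would invoke the equivalence (i) $\Leftrightarrow$ (ii): $\mathfrak{F}^\perp$ is totally geodesic if and only if $B(X,Y) \in \Gamma(\mu)$ for all $X \in \Gamma(D)$ and $Y \in \Gamma(D^\perp)$.

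From here the argument is essentially a one-liner. By Definition \ref{2.7}(iii), the mixed geodesic hypothesis means $B(X,Y) = 0$ for all $X \in \Gamma(D)$, $Y \in \Gamma(D^\perp)$. Since the zero section belongs trivially to $\Gamma(\mu)$, condition (ii) of Theorem \ref{3.1} is automatically satisfied, so condition (i) follows and $\mathfrak{F}^\perp$ is totally geodesic.

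There is no real obstacle: the entire proposition is a direct corollary of Theorem \ref{3.1} once one recognizes that mixed geodesic is a strictly stronger condition than condition (ii) of that theorem. I would therefore write the proof as a single short paragraph that cites Theorem \ref{3.1} and Definition \ref{2.7}(iii), rather than redoing any of the computation with $(\overline{\nabla}_X J_\alpha)Z$ that was already used to establish (\ref{13bis}).
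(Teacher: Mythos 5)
Your proposal is correct and follows exactly the same route as the paper, whose proof of Proposition \ref{3.3} is precisely the one-line reduction to Theorem \ref{3.1}. You merely make explicit what the paper leaves implicit, namely that the mixed geodesic condition $B(X,Y)=0$ for $X\in\Gamma(D)$, $Y\in\Gamma(D^\perp)$ trivially yields $B(X,Y)\in\Gamma(\mu)$, so condition (ii) of Theorem \ref{3.1} holds.
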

\begin{proof}
The assertion follows from Theorem \ref{3.1}.
\end{proof}

A submanifold $M$ of a Riemannian manifold
$(\overline{M},\overline{g})$ is said to be a ruled submanifold if
it admits a foliation whose leaves are totally geodesic immersed in
$(\overline{M},\overline{g})$.

\begin{defn}
A quaternion CR-submanifold of an almost quaternion K\"{a}hler
product manifold which is a ruled submanifold with respect to the
foliation $\mathfrak{F}^\perp$ is called totally real ruled
quaternion CR-submanifold.
\end{defn}

\begin{thm}\label{4.2}
Let $M$ be a quaternion CR-submanifold of an almost quaternion
K\"{a}hler product manifold $(\overline{M},F,\sigma,\overline{g})$
such that $D^\perp$ is $F$-invariant. The next assertions are
equivalent:
\\
i. $M$ is a totally real ruled quaternion CR-submanifold.
\\
ii. $M$ is $D^\perp$-geodesic and:
$$B(X,Y)\in\Gamma(\mu),\ \forall
X\in\Gamma(D),\ Y\in\Gamma(D^\perp).$$ iii. The subbundle
$\mu^\perp$ is $D^\perp$-parallel, i.e:
$$\nabla_X^\perp J_\alpha Z\in\Gamma(\mu^\perp),\ \forall X,Z\in
D^\perp,\ \alpha\in\{1,2,3\}$$ and the second fundamental form
satisfies:
$$B(X,Y)\in\Gamma(\mu),\ \forall
X\in\Gamma(D^\perp),\ Y\in\Gamma(TM).$$ iv. The shape operator
satisfies:
$$A_{J_\alpha Z}X=0,\ \forall X,Z\in D^\perp,\ \alpha\in\{1,2,3\}$$
and
$$A_N X\in\Gamma(D),\ \forall X\in\Gamma(D^\perp),\ N\in\Gamma(\mu).$$
\end{thm}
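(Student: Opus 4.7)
The plan is to prove the four-way equivalence by taking (ii) as the pivot, establishing (i)$\Leftrightarrow$(ii), (ii)$\Leftrightarrow$(iv), and (ii)$\Leftrightarrow$(iii) in turn. Each of (iii) and (iv) decomposes into a half that, via Theorem~\ref{3.1}, expresses that the leaves of $\mathfrak{F}^\perp$ are totally geodesic inside $M$, together with a half that captures the $D^\perp$-geodesic condition on $M$ itself.

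For (i)$\Leftrightarrow$(ii), a leaf $L$ of $\mathfrak{F}^\perp$ satisfies $TL=D^\perp$, so by the Gauss formula (\ref{6}) it is totally geodesic in $\overline{M}$ iff $\nabla_X Y\in\Gamma(D^\perp)$ and $B(X,Y)=0$ for all $X,Y\in\Gamma(D^\perp)$. The first condition says that $\mathfrak{F}^\perp$ is totally geodesic in $M$, which by Theorem~\ref{3.1} is equivalent to $B(X,Y)\in\Gamma(\mu)$ for $X\in\Gamma(D)$, $Y\in\Gamma(D^\perp)$; the second is the definition of $D^\perp$-geodesicity.

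For (ii)$\Leftrightarrow$(iv), I would dualize via (\ref{8}) and the orthogonal decomposition $TM^\perp=\mu\oplus\mu^\perp$. Condition (iv)(a), $A_{J_\alpha Z}X=0$ for $X,Z\in\Gamma(D^\perp)$, translates into $B(X,Y)\perp\mu^\perp$ for $X\in\Gamma(D^\perp)$, $Y\in\Gamma(TM)$; condition (iv)(b), $A_N X\in\Gamma(D)$ for $X\in\Gamma(D^\perp)$, $N\in\Gamma(\mu)$, translates into $B(X,Z)\perp\mu$ for $X,Z\in\Gamma(D^\perp)$. Specializing (iv)(a) to $Y\in\Gamma(D^\perp)$ and intersecting with (iv)(b) forces $B(X,Z)\in\mu\cap\mu^\perp=\{0\}$, i.e., $M$ is $D^\perp$-geodesic, while the remainder of (iv)(a) gives $B(X,Y)\in\Gamma(\mu)$ for $X\in\Gamma(D^\perp)$, $Y\in\Gamma(D)$, which by symmetry of $B$ is the second half of (ii). The converse reverses the same translations.

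For (ii)$\Leftrightarrow$(iii), I would first verify as a preliminary that $\mu$ is itself $J_\alpha$-invariant: for $N\in\Gamma(\mu)$, compatibility of $J_\alpha$ with $\overline{g}$ together with $J_\alpha(D)\subseteq D$ and $J_\alpha(D^\perp)\subseteq\mu^\perp$ yields both $J_\alpha N\in\Gamma(TM^\perp)$ and $J_\alpha N\perp\mu^\perp$. Then, for $X,Z\in\Gamma(D^\perp)$ and $\alpha\in\{1,2,3\}$, I would compute $\overline{\nabla}_X(J_\alpha Z)$ in two ways — via (\ref{7}) and via $(\overline{\nabla}_X J_\alpha)Z+J_\alpha\overline{\nabla}_X Z$ — observing that (\ref{3}) together with the $F$-invariance of $D^\perp$ forces the tensorial term $(\overline{\nabla}_X J_\alpha)Z$ into $\mu^\perp$. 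Pairing the resulting identity against $N\in\Gamma(\mu)$ collapses all other contributions and leaves
\[
\overline{g}(\nabla_X^\perp J_\alpha Z,N)=-\overline{g}(B(X,Z),J_\alpha N),
\]
so, by the $J_\alpha$-invariance of $\mu$, the $\mu$-component of $\nabla_X^\perp J_\alpha Z$ coincides (up to sign) with that of $B(X,Z)$. Once the second half of (iii) is assumed, which forces $B(X,Z)\in\Gamma(\mu)$ for $X,Z\in\Gamma(D^\perp)$, the first half of (iii) becomes equivalent to $B|_{D^\perp\times D^\perp}=0$, matching $D^\perp$-geodesicity, while the second half for $Y\in\Gamma(D)$ matches the other part of (ii); conversely (ii) feeds back into the displayed identity and yields (iii). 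The main obstacle is the bookkeeping around $\mu$ and $\mu^\perp$ — in particular the preliminary $J_\alpha$-invariance of $\mu$, which is what lets a pointwise orthogonality statement be upgraded to an equality of $\mu$-components.
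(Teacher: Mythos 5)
Your proof is correct, and it reaches assertion (iii) by a differently organized chain than the paper. The paper proves i $\Leftrightarrow$ ii exactly as you do (Gauss formula plus Theorem \ref{3.1}) and dispatches ii $\Leftrightarrow$ iv with the same appeal to (\ref{8}) that you spell out in detail; but it handles (iii) by proving i $\Leftrightarrow$ iii directly, characterizing when $\overline{\nabla}_XZ\in\Gamma(D^\perp)$ through all three components of $\overline{\nabla}_XZ$ orthogonal to $D^\perp$: the $D$-component $\overline{g}(\overline{\nabla}_XZ,U)=-\overline{g}(B(X,J_\alpha U),J_\alpha Z)$, the $\mu^\perp$-component $\overline{g}(\overline{\nabla}_XZ,J_\alpha W)=\overline{g}(B(X,Z),J_\alpha W)$, and the $\mu$-component $\overline{g}(\overline{\nabla}_XZ,N)=\overline{g}(\nabla^\perp_XJ_\alpha Z,J_\alpha N)$, its equations (\ref{10})--(\ref{12}). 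Your route uses (ii) as the hub and needs only one of these computations: your displayed identity $\overline{g}(\nabla_X^\perp J_\alpha Z,N)=-\overline{g}(B(X,Z),J_\alpha N)$ is precisely the paper's (\ref{12}) with $N$ replaced by $J_\alpha N$, and you replace the work of (\ref{10}) and (\ref{11}) by orthogonality bookkeeping for $B$ relative to the splitting $TM^\perp=\mu\oplus\mu^\perp$, together with the already-established i $\Leftrightarrow$ ii. What this buys is modularity, and it makes explicit a lemma the paper uses only tacitly, namely that $\mu$ is $J_\alpha$-invariant; note that in your verification of $J_\alpha N\perp\mu^\perp$ you also need the quaternionic relations $J_\alpha J_\beta=\pm J_\gamma$ to kill the cross terms $\overline{g}(J_\alpha N,J_\beta Z)$ with $\beta\neq\alpha$, a point your sketch leaves implicit though the check is routine. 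The paper's direct i $\Leftrightarrow$ iii, by contrast, exhibits geometrically exactly which normal data obstruct $\overline{\nabla}_XZ$ from staying in $D^\perp$, at the cost of two additional computations of the same type as (\ref{12}).
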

\begin{proof}
i. $\Leftrightarrow$ ii. For any $X,Z\in\Gamma(D^\perp)$ we have:
\begin{eqnarray}
       \overline{\nabla}_XZ&=&\nabla_XZ+B(X,Z)\nonumber\\
       &=&\nabla_X^{D^\perp}Z+h^\perp(X,Z)+B(X,Z)\nonumber
       \end{eqnarray}
and thus we conclude that the leaves of $D^\perp$ are totally
geodesic immersed in $\overline{M}$ if and only if  $h^\perp=0$ and
$M$ is $D^\perp$-geodesic. The equivalence is now clear from Theorem
\ref{3.1}.
\\
i. $\Leftrightarrow$ iii. For any $X,Z\in\Gamma(D^\perp)$, and
$U\in\Gamma(D)$ we have:
\begin{eqnarray}
\overline{g}(\overline{\nabla}_XZ,U)&=&\overline{g}(J_\alpha\overline{\nabla}_XZ,J_\alpha U)\nonumber\\
&=&\overline{g}(-(\overline{\nabla}_XJ_\alpha)Z+\overline{\nabla}_XJ_\alpha
Z,J_\alpha
       U)\nonumber\\
       &=&k\overline{g}(\omega_\beta(X)J_\gamma
Z-\omega_\gamma(X)J_\beta
Z,J_\alpha U)\nonumber\\&&+k\overline{g}(\omega_\beta(FX)J_\gamma(FZ)-\omega_\gamma(FX)J_\beta(FZ),J_\alpha U)\nonumber\\
       &&+\overline{g}(-A_{J_\alpha Z}X+\nabla_X^{\perp}J_\alpha
Z,J_\alpha U)\nonumber\\
       &=&-g(A_{J_\alpha Z}X,J_\alpha U)\nonumber
       \end{eqnarray}
where $(\alpha,\beta,\gamma)$ is an even permutation of (1,2,3), and
from (\ref{8}) we obtain:
\begin{equation}\label{10}
\overline{g}(\overline{\nabla}_XZ,U)=-\overline{g}(B(X,J_\alpha
U),J_\alpha Z).
\end{equation}

On the other hand, for any $X,Z,W\in\Gamma(D^\perp)$ we have:
\begin{eqnarray}\label{11}
       \overline{g}(\overline{\nabla}_XZ,J_\alpha
W)=\overline{g}(B(X,Z),J_\alpha W).
       \end{eqnarray}

If $X,Z\in\Gamma(D^\perp)$ and $N\in\Gamma(\mu)$, then we have:
\begin{eqnarray}
\overline{g}(\overline{\nabla}_XZ,N)&=&\overline{g}(J_\alpha\overline{\nabla}_XZ,J_\alpha N)\nonumber\\
&=&\overline{g}(-(\overline{\nabla}_XJ_\alpha)Z+\overline{\nabla}_XJ_\alpha Z,J_\alpha N)\nonumber\\
       &=&k\overline{g}(\omega_\beta(X)J_\beta
Z+\omega_\gamma(X)J_\gamma
       Z,N)\nonumber\\&&+k\overline{g}(\omega_\beta(FX)J_\beta(FZ)+\omega_\gamma(FX)J_\gamma(FZ),N)\nonumber\\
       &&+\overline{g}(-A_{J_\alpha Z}X+\nabla_X^{\perp}J_\alpha
Z,J_\alpha N)\nonumber
       \end{eqnarray}
and thus we obtain:
\begin{equation}\label{12}
\overline{g}(\overline{\nabla}_XZ,N)=\overline{g}(\nabla^\perp_X
J_\alpha Z,J_\alpha N).
\end{equation}

Finally, $M$ is a totally real ruled quaternion CR-submanifold if
and only if $\overline{\nabla}_X Z\in\Gamma(D^\perp)$, $\forall
X,Z\in\Gamma(D^\perp)$ and by using (\ref{10}), (\ref{11}) and
(\ref{12}) we deduce the equivalence.
\\
ii. $\Leftrightarrow$ iv. This is clear from (\ref{8}).
\end{proof}

\begin{cor}\label{4.3}
Let $M$ be a quaternion CR-submanifold of an almost quaternion
K\"{a}hler product manifold $(\overline{M},F,\sigma,\overline{g})$
such that $D^\perp$ is $F$-invariant. If $M$ is totally geodesic,
then $M$ is a totally real ruled quaternion CR-submanifold.
\end{cor}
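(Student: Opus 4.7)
The plan is to deduce the corollary as an immediate consequence of the equivalence (i) $\Leftrightarrow$ (ii) in Theorem \ref{4.2}. Recall that $M$ is totally geodesic in $\overline{M}$ precisely when its second fundamental form vanishes identically, i.e.\ $B(X,Y)=0$ for all $X,Y\in\Gamma(TM)$.

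Under this hypothesis, both conditions in assertion (ii) of Theorem \ref{4.2} are trivially verified. First, the vanishing of $B$ on all pairs of tangent vectors forces in particular $B(X,Y)=0$ for $X,Y\in\Gamma(D^\perp)$, so $M$ is $D^\perp$-geodesic. Second, for $X\in\Gamma(D)$ and $Y\in\Gamma(D^\perp)$ we also have $B(X,Y)=0$, and the zero section of $TM^\perp$ trivially lies in $\Gamma(\mu)$, so the mixed component of $B$ takes values in $\mu$.

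Having verified (ii) of Theorem \ref{4.2}, I invoke the equivalence (ii) $\Leftrightarrow$ (i) of that theorem to conclude that $M$ is a totally real ruled quaternion CR-submanifold. There is essentially no obstacle here: the corollary is a straightforward specialization, and the only thing to check is that the hypothesis of $F$-invariance of $D^\perp$ (which is needed so that $\mathfrak{F}^\perp$ is defined via Theorem \ref{2.8}) is already being assumed in the statement. The proof therefore reduces to the two short lines above followed by the citation of Theorem \ref{4.2}.
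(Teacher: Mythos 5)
Your proof is correct and follows exactly the paper's route: the paper's own proof simply cites Theorem \ref{4.2}, and you have merely spelled out the trivial verification that $B\equiv 0$ yields condition (ii) of that theorem. Nothing is missing and nothing differs in substance.
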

\begin{proof}
The assertion is clear from Theorem \ref{4.2}.
\end{proof}

\section{QR-products and foliations with bundle-like metric}

From Theorem \ref{2.9} we deduce that any $D$-geodesic
CR-submanifold of an almost quaternion K\"{a}hler product manifold
such that $D$ is $F$-invariant, admits a $\sigma$-invariant totally
geodesic foliation, which we denote by $\mathfrak{F}$, called
quaternion foliation.

\begin{prop}\label{6.2}
If $M$ is a totally geodesic quaternion CR-submanifold of an almost
quaternion K\"{a}hler product manifold
$(\overline{M},F,\sigma,\overline{g})$ such that $D$ and $D^\perp$
are $F$-invariant, then $M$ is a ruled submanifold with respect to
both foliations $\mathfrak{F}$ and $\mathfrak{F^\perp}$.
\end{prop}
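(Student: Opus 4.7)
The plan is to verify that each of the two foliations $\mathfrak{F}^\perp$ and $\mathfrak{F}$ is ruled, using essentially independent arguments.

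For $\mathfrak{F}^\perp$ nothing new is needed: since $M$ is totally geodesic and $D^\perp$ is $F$-invariant, Corollary \ref{4.3} applies at once and gives that the leaves of $\mathfrak{F}^\perp$ are totally geodesically immersed in $\overline{M}$.

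For $\mathfrak{F}$ I would first note that the foliation is actually well defined: because $M$ is totally geodesic, $B\equiv 0$, so in particular $M$ is $D$-geodesic; combined with the $F$-invariance of $D$, Theorem \ref{2.9} yields the integrability of $D$. To prove ruledness, one must show that $\overline{\nabla}_X Y\in\Gamma(D)$ for all $X,Y\in\Gamma(D)$. Since $B=0$ gives $\overline{\nabla}_X Y=\nabla_X Y\in\Gamma(TM)$, it suffices to test against an arbitrary $Z\in\Gamma(D^\perp)$ and show that $\overline{g}(\overline{\nabla}_X Y,Z)=0$. The computation is in the same spirit as the ones carried out in the proofs of Theorems \ref{3.1} and \ref{4.2}: using (\ref{2}) and the Leibniz rule for $\overline{\nabla}$ one rewrites
$$\overline{g}(\overline{\nabla}_X Y,Z)=\overline{g}(J_\alpha\overline{\nabla}_X Y,J_\alpha Z)=\overline{g}(\overline{\nabla}_X(J_\alpha Y),J_\alpha Z)-\overline{g}((\overline{\nabla}_X J_\alpha)Y,J_\alpha Z),$$
with $J_\alpha Z\in\Gamma(\mu^\perp)\subset\Gamma(TM^\perp)$. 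The first term vanishes because $J_\alpha Y\in\Gamma(D)\subset\Gamma(TM)$ and $B=0$ force $\overline{\nabla}_X(J_\alpha Y)=\nabla_X(J_\alpha Y)\in\Gamma(TM)$, hence orthogonal to $J_\alpha Z$. The second term vanishes after inspecting the formula (\ref{3}): $(\overline{\nabla}_XJ_\alpha)Y$ is a combination of $J_\beta Y$, $J_\gamma Y$, $J_\beta(FY)$, $J_\gamma(FY)$, and since $D$ is both quaternion-invariant and $F$-invariant each of these summands lies in $\Gamma(D)$, hence is again orthogonal to $J_\alpha Z$.

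Combining the two vanishings one gets $\overline{\nabla}_X Y\in\Gamma(D)$, i.e. the leaves of $\mathfrak{F}$ are totally geodesic in $\overline{M}$. The main (and essentially only) obstacle is controlling the $(\overline{\nabla}_X J_\alpha)Y$ contribution: this is exactly the step where the assumed $F$-invariance of $D$ is crucial, because it is needed to ensure $FY\in\Gamma(D)$ and thereby $J_\beta(FY),J_\gamma(FY)\in\Gamma(D)$. Everything else is routine bookkeeping parallel to the arguments already used in Section 3.
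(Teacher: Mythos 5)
Your proposal is correct and takes essentially the same route as the paper: Corollary \ref{4.3} handles $\mathfrak{F}^\perp$, and for $\mathfrak{F}$ the paper simply invokes Theorem \ref{2.9} (via the remark preceding the proposition that the quaternion foliation is totally geodesic, combined with $B=0$), which is precisely the content your direct computation of $\overline{g}(\overline{\nabla}_X Y,Z)$ verifies. Your version just unpacks the cited step explicitly --- correctly using $B=0$ for the $\overline{\nabla}_X(J_\alpha Y)$ term and the $F$-invariance of $D$ to keep the $(\overline{\nabla}_X J_\alpha)Y$ terms in $\Gamma(D)$ --- so it is a sound, self-contained rendering of the paper's argument rather than a different one.
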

\begin{proof}
The assertion follows from Theorem \ref{2.9} and Corollary
\ref{4.3}.
\end{proof}

\begin{thm}\label{6.3}
Let $M$ be a quaternion CR-submanifold of an almost quaternion
K\"{a}hler product manifold $(\overline{M},F,\sigma,\overline{g})$
such that $D$ and $D^\perp$ are $F$-invariant. Then $M$ is a
QR-product if and only if the next two conditions are satisfied:
\\
i. $M$ is $D$-geodesic.
\\
ii. $B(X,Y)\in\Gamma(\mu),\ \forall X\in\Gamma(D^\perp),\
Y\in\Gamma(D)$.
\end{thm}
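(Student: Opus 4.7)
The plan is to characterize being a QR-product via a local Riemannian splitting of $M$ along $D$ and $D^\perp$, which, by a local de Rham--type theorem, reduces to showing that both distributions are integrable and both associated foliations are totally geodesic in $M$.

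For the direct implication, suppose $M$ is locally the Riemannian product $M_1 \times M_2$ with $M_1$ quaternion and $M_2$ totally real in $\overline{M}$. Since every $J_\alpha$-invariant subbundle of $TM$ must sit inside $D$ (because $D^\perp$ is totally real), a dimension argument gives $TM_1 = D$ and $TM_2 = D^\perp$. The product structure then makes both $D$ and $D^\perp$ integrable with totally geodesic leaves in $M$. In particular $D$ is integrable and $F$-invariant, so Theorem \ref{2.9} forces $M$ to be $D$-geodesic, giving (i). Total geodesicity of $\mathfrak{F}^\perp$ combined with Theorem \ref{3.1} then gives (ii).

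For the converse, assume (i) and (ii). Theorem \ref{2.8} yields integrability of $D^\perp$, condition (i) together with Theorem \ref{2.9} yields integrability of $D$, and condition (ii) via Theorem \ref{3.1} makes $\mathfrak{F}^\perp$ totally geodesic in $M$. The principal remaining step is to show that $\mathfrak{F}$ is also totally geodesic in $M$, i.e. $\overline{g}(\overline{\nabla}_X Y, Z) = 0$ for all $X, Y \in \Gamma(D)$ and $Z \in \Gamma(D^\perp)$. Using the isometry of $J_\alpha$ together with $J_\alpha \overline{\nabla}_X Y = \overline{\nabla}_X(J_\alpha Y) - (\overline{\nabla}_X J_\alpha) Y$, I would write
\[
\overline{g}(\overline{\nabla}_X Y, Z) = \overline{g}(\overline{\nabla}_X(J_\alpha Y), J_\alpha Z) - \overline{g}((\overline{\nabla}_X J_\alpha) Y, J_\alpha Z).
\]
The structure equation (\ref{3}) and the $F$-invariance of $D$ show that $(\overline{\nabla}_X J_\alpha) Y \in \Gamma(D) \subset \Gamma(TM)$ (each summand has the form $J_\beta Y'$ with $Y' \in \Gamma(D)$), so the second term vanishes against $J_\alpha Z \in \Gamma(TM^\perp)$; the Gauss formula then reduces the first term to $\overline{g}(B(X, J_\alpha Y), J_\alpha Z)$, which vanishes by (i) since $X, J_\alpha Y \in \Gamma(D)$.

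With both foliations integrable and totally geodesic in $M$, a local de Rham--type decomposition yields that $M$ is locally isometric to a Riemannian product $M_1 \times M_2$ with $TM_1 = D$ and $TM_2 = D^\perp$. Then $M_1$ is a quaternion submanifold of $\overline{M}$ by the $J_\alpha$-invariance of $D$, and $M_2$ is totally real in $\overline{M}$ since $J_\alpha TM_2 = J_\alpha D^\perp \subset TM^\perp$ is orthogonal to $TM_2$; hence $M$ is a QR-product. The main obstacle is the short computation above, where care is required to read off the tangential character of $(\overline{\nabla}_X J_\alpha) Y$ from (\ref{3}) using the $F$-invariance of $D$, together with the tactical choice of inserting $J_\alpha$ so that $D$-geodesicity can be invoked at the end.
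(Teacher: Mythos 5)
Your proof is correct and follows essentially the same route as the paper, which simply invokes Theorems \ref{2.9} and \ref{3.1} (together with the remark that a $D$-geodesic CR-submanifold with $F$-invariant $D$ carries a totally geodesic quaternion foliation, and an implicit local de Rham decomposition). You merely make explicit what the paper leaves tacit: the computation showing $\nabla_XY\in\Gamma(D)$ for $X,Y\in\Gamma(D)$ via $(\overline{\nabla}_XJ_\alpha)Y\in\Gamma(D)$, and the identification $TM_1=D$, $TM_2=D^\perp$ in the product, both of which are carried out correctly.
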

\begin{proof}
The proof is immediate from Theorems \ref{2.9} and \ref{3.1}.\\
\end{proof}

Let $(M,g)$ be a Riemannian manifold and $\mathfrak{F}$ a foliation
on $M$. The metric $g$ is said to be bundle-like for the foliation
$\mathfrak{F}$ if the induced metric on the transversal distribution
$D^\perp$ is parallel with respect to the intrinsic connection on
$D^\perp$. This is true if and only if the Levi-Civita connection
$\nabla$ of $(M,g)$ satisfies (see \cite{BJC2}):
\begin{equation}\label{13}
g(\nabla_{Q^\perp Y}QX,Q^\perp Z)+g(\nabla_{Q^\perp Z}QX,Q^\perp
Y)=0,\ \forall X,Y,Z\in\Gamma(TM),
\end{equation}
where $Q^\perp$ is the projection morphism of $TM$ on $D^\perp$.

If for a given foliation $\mathfrak{F}$ there exists a Riemannian
metric $g$ on $M$ which is bundle-like for $\mathfrak{F}$, then we
say that $\mathfrak{F}$ is a Riemannian foliation on $(M,g)$.

\begin{thm}\label{5.1}
Let $M$ be a quaternion CR-submanifold of an almost quaternion
K\"{a}hler product manifold $(\overline{M},F,\sigma,\overline{g})$
such that $D^\perp$ is $F$-invariant. The next assertions are
equivalent:
\\
i. The induced metric $g$ on $M$ is bundle-like for the totally real
foliation $\mathfrak{F}^\perp$.
\\
ii. The second fundamental form $B$ of $M$ satisfies:
$$B(U,J_\alpha
V)+B(V,J_\alpha U)\in\Gamma(\mu)\oplus J_\beta(D^\perp)\oplus
J_\gamma(D^\perp),\ \forall U,V\in\Gamma(D)$$ for $\alpha=1,2$ or
$3$, where $(\alpha,\beta,\gamma)$ is an even permutation of
$(1,2,3)$.
\end{thm}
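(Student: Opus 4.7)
The plan is to translate the bundle-like condition (\ref{13}) into a pointwise identity on the second fundamental form and then recognize the identity as condition (ii). For the foliation $\mathfrak{F}^\perp$ the structural distribution is $D^\perp$ and the transversal distribution is $D$, so applying (\ref{13}) to the orthogonal decomposition $TM=D\oplus D^\perp$ reduces the bundle-like property to the pointwise statement
\[
g(\nabla_U Z, V) + g(\nabla_V Z, U) = 0,\qquad \forall\,U,V\in\Gamma(D),\ Z\in\Gamma(D^\perp).
\]

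To evaluate a single term $g(\nabla_U Z, V)$, I would use the Gauss formula (\ref{6}) to pass from $\nabla$ to $\overline{\nabla}$ (the $B(U,Z)$ contribution is normal, hence orthogonal to $V\in\Gamma(TM)$), and then use the $J_\alpha$-isometry (\ref{2}) to write
\[
\overline{g}(\overline{\nabla}_U Z, V) = \overline{g}(J_\alpha \overline{\nabla}_U Z, J_\alpha V) = \overline{g}\bigl(\overline{\nabla}_U(J_\alpha Z) - (\overline{\nabla}_U J_\alpha)Z,\ J_\alpha V\bigr).
\]
Since $Z\in\Gamma(D^\perp)$ is totally real, $J_\alpha Z\in\Gamma(TM^\perp)$, so the Weingarten formula (\ref{7}) applies to $\overline{\nabla}_U(J_\alpha Z)$; the normal piece $\nabla^\perp_U J_\alpha Z$ is orthogonal to $J_\alpha V\in\Gamma(D)\subset\Gamma(TM)$, and the tangential piece yields $-g(A_{J_\alpha Z}U, J_\alpha V)=-\overline{g}(B(U,J_\alpha V),J_\alpha Z)$ through (\ref{8}).

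The crux is to show that the connection-correction term $\overline{g}((\overline{\nabla}_U J_\alpha)Z, J_\alpha V)$ vanishes. Inserting $Z$ into (\ref{3}), and using that $D^\perp$ is $F$-invariant so that $FZ\in\Gamma(D^\perp)$, every summand on the right-hand side has the form $J_\beta W$ or $J_\gamma W$ with $W\in\Gamma(D^\perp)$, hence lies in $\Gamma(TM^\perp)$; on the other hand $J_\alpha V\in\Gamma(D)\subset\Gamma(TM)$, so the pairing is zero. This leaves the clean identity
\[
g(\nabla_U Z, V) = -\overline{g}(B(U,J_\alpha V),\, J_\alpha Z),\qquad \alpha\in\{1,2,3\}.
\]

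Swapping $U$ and $V$ in this identity and adding, the bundle-like condition becomes $\overline{g}(B(U,J_\alpha V)+B(V,J_\alpha U),\,J_\alpha Z)=0$ for every $Z\in\Gamma(D^\perp)$ and each $\alpha$. Using the orthogonal decomposition $TM^\perp=\mu\oplus J_1 D^\perp\oplus J_2 D^\perp\oplus J_3 D^\perp$, annihilation against $J_\alpha(D^\perp)$ is equivalent to lying in $\mu\oplus J_\beta(D^\perp)\oplus J_\gamma(D^\perp)$, which is precisely (ii). The main obstacle I foresee is verifying carefully that the four terms in (\ref{3}) all land in $\Gamma(TM^\perp)$ — this relies in an essential way on the $F$-invariance hypothesis for $D^\perp$ — after which the remaining work is just linear algebra in the normal bundle.
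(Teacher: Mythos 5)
Your proposal is correct and takes essentially the same route as the paper: the paper likewise reduces the bundle-like condition (\ref{13}) to the identity $g(\nabla_U X,V)+g(\nabla_V X,U)=0$ for $X\in\Gamma(D^\perp)$, $U,V\in\Gamma(D)$, computes each term through the Gauss and Weingarten formulas together with (\ref{3}) (where, as you note, the correction terms vanish because $F$-invariance of $D^\perp$ keeps $FX$ totally real, so all summands are normal while $J_\alpha V$ is tangent), and arrives at your symmetrized identity, which is exactly the paper's equation (\ref{15}). The final step, translating orthogonality to $J_\alpha(D^\perp)$ into membership in $\Gamma(\mu)\oplus J_\beta(D^\perp)\oplus J_\gamma(D^\perp)$ via the decomposition of $TM^\perp$, matches the paper's conclusion as well.
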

\begin{proof}
From (\ref{13}) we deduce that $g$ is bundle-like for totally real
foliation $\mathfrak{F}^\perp$ if and only if:
\begin{equation}\label{14}
g(\nabla_U X,V)+g(\nabla_V X,U)=0,\ \forall X\in\Gamma(D^\perp),\
U,V\in\Gamma(D).
\end{equation}

On the other hand, for any $X\in\Gamma(D^\perp),\ U,V\in\Gamma(D)$
we have:
\begin{eqnarray}
       g(\nabla_U X,V)+g(\nabla_V
X,U)&=&\overline{g}(\overline{\nabla}_U X-B(U,X),V)+
       \overline{g}(\overline{\nabla}_V X-B(V,X),U)\nonumber\\
       &=&\overline{g}(\overline{\nabla}_U X,V)+
       \overline{g}(\overline{\nabla}_V X,U)\nonumber\\
       &=&\overline{g}(-(\overline{\nabla}_U
J_\alpha)X+\overline{\nabla}_U J_\alpha X,J_\alpha
V)\nonumber\\&&+\overline{g}(-(\overline{\nabla}_V
J_\alpha)X+\overline{\nabla}_V J_\alpha X,J_\alpha U)\nonumber\\
       &=&k\overline{g}(\omega_\beta(U)J_\gamma
X-\omega_\gamma(U)J_\beta
X,J_\alpha V)\nonumber\\&&+k\overline{g}(\omega_\beta(FU)J_\gamma(FX)-\omega_\gamma(FU)J_\beta(FX),J_\alpha V)\nonumber\\
       &&+k\overline{g}(\omega_\beta(V)J_\gamma
X-\omega_\gamma(V)J_\beta
X,J_\alpha U)\nonumber\\&&+k\overline{g}(\omega_\beta(FV)J_\gamma(FX)-\omega_\gamma(FV)J_\beta(FX),J_\alpha U)\nonumber\\
       &&+\overline{g}(\overline{\nabla}_U J_\alpha X,J_\alpha V)
       +\overline{g}(\overline{\nabla}_V J_\alpha X,J_\alpha
U)\nonumber\\
       &=&-g(A_{J_\alpha X} U,J_\alpha V)-g(A_{J_\alpha X}V, J_\alpha
U)\nonumber
       \end{eqnarray}
where $(\alpha,\beta,\gamma)$ is an even permutation of (1,2,3), and
taking into account (\ref{8}) we derive:
\begin{equation}\label{15}
g(\nabla_U X,V)+g(\nabla_V X,U)=-\overline{g}(B(U,J_\alpha
V)+B(V,J_\alpha U),J_\alpha X),
\end{equation}
for any $X\in\Gamma(D^\perp),\ U,V\in\Gamma(D)$.

The proof is now complete from (\ref{14}) and (\ref{15}).
\end{proof}
\textbf{Acknowledgement}\\ The author expresses his gratitude to the
referee for carefully reading the manuscript and giving useful
comments. This work was partially supported by a PN2-IDEI grant, no.
525/2009.

\address{
$^1$Department of Mathematics and Computer Science,

Petroleum-Gas University of Ploie\c sti,

Bulevardul Bucure\c sti, Nr. 39, Ploie\c sti 100680, Romania

$^2$Research Center in Geometry, Topology and Algebra,

Faculty of Mathematics and Computer Science,

University of Bucharest,

Str. Academiei, Nr.14, Sector 1, Bucharest 70109, Romania\\

E-mail: gvilcu@mail.upg-ploiesti.ro}

\end{document}